\begin{document}
\def\rn{{\mathbb R^n}}  \def\sn{{\mathbb S^{n-1}}}
\def\co{{\mathcal C_\Omega}}
\def\z{{\mathbb Z}}
\def\nm{{\mathbb (\rn)^m}}
\def\mm{{\mathbb (\rn)^{m+1}}}
\def\n{{\mathbb N}}
\def\cc{{\mathbb C}}

\newtheorem{defn}{Definition}
\newtheorem{thm}{Theorem}
\newtheorem{lem}{Lemma}
\newtheorem{cor}{Corollary}
\newtheorem{rem}{Remark}

\title{\bf\Large Mixed radial-angular bounds for Hardy-type operators on Heisenberg groups
\footnotetext{{\it Key words and phrases}: Hardy operator; dual operator; weighted Hardy operator; weighted Ces\`{a}ro operator; mixed radial-angular space; Heisenberg group.
\newline\indent\hspace{1mm} {\it 2020 Mathematics Subject Classification}: Primary 42B25; Secondary 42B20, 47H60, 47B47.}}

\date{}
\author{Zhongci Hang, Xiang Li\footnote{Corresponding author}, Dunyan Yan}
\maketitle
\begin{center}
\begin{minipage}{13cm}
{\small {\bf Abstract:}\quad
In this paper, we will study $n$-dimensional Hardy operator and its dual in mixed radial-angular spaces on Heisenberg groups and obtain their sharp bounds by using the rotation method. Furthermore, the sharp bounds of $n$-dimensional weighted Hardy operator and weighted Ces\`{a}ro operator are also obtained.}
\end{minipage}
\end{center}

\section{Introduction}\label{sec1}
\par
The classic Hardy operator and its dual operator are defined by
$$
H(f)(x):=\frac{1}{x}\int_{0}^{x}f(y)d y,\quad H^*(f)(x):=\int_{x}^{\infty}\frac{f(y)}{y}dy,
$$
for the locally integrable function $f$ on $\mathbb{R}$ and $x\neq0.$
The classic Hardy operator was introduced by Hardy \cite{Hardy} and he showed the following Hardy inequalities
$$
\|H(f)\|_{L^p}\leq\frac{p}{p-1}\|f\|_{L^p},\quad \|H^*(f)\|_{L^p}\leq p\|f\|_{L^p},
$$
 where $1<p<\infty$, the constants $\frac{p}{p-1}$, $p$ are best possible.
 
Faris\cite{Faris} first extended Hardy-type operator to higher dimension, Christ and Grafakos\cite{Christ} gave an equivalent version of $n$-dimensional Hardy operator $\mathcal{H}$ for nonnegative function $f$ on $\mathbb{R}^n$,
$$
\mathcal{H}f(x):=\frac{1}{\Omega_n|x|^n}\int_{|y|<|x|}f(y)dy,x\in\mathbb{R}^n\backslash\{0\},
$$
where $\Omega_n=\frac{\pi^{\frac{n}{2}}}{\Gamma(1+\frac{n}{2})}$ is the volume of the unit ball in $\mathbb{R}^n$.By direct computation,the dual operator of $\mathcal{H}$ can be defined by setting, for nonnegative function $f$ on $\mathbb{R}^n$,
$$
\mathcal{H}^*(f)(x):=\int_{|y|\geq|x|}\frac{f(y)}{\Omega_n|y|^n}dy,x\in\mathbb{R}^n\backslash\{0\}.
$$
 Christ and Grafakos\cite{Christ} proved that the norms of $\mathcal{H}$ and $\mathcal{H}^*$ on $L^p(\mathbb{R}^n)(1<p<\infty)$ are also $\frac{p}{p-1}$ and $p$, which is the same as that in the 1-dimensional case and is also independent of the dimension. The sharp weak estimate for $\mathcal{H}$ was obtained by Zhao et al.\cite{Zhao}.For $1\leq p\leq\infty$, 
 $$
 \|\mathcal{H}(f)\|_{L^{p,\infty}}\leq1\times\|f\|_{L^p},
 $$
 where 1 is best constant.
 
 In recent years, the research on Hardy operator related problems is receiving increasing attention. In \cite{Hardy2} , Hardy et al. provided us with the early development and application of    Hardy's inequalities.
 
 In this paper, we will investigate the sharp bound for Hardy-type operators in the setting of the  Heisenberg group, which plays an important role in several branches of mathematics. Now, let us to introduce some basic knowledge about the Heisenberg group which will be used in the following. The Heisenberg group $\mathbb{H}^n$ is a non-commutative nilpotent Lie group, with the underlying manifold $\mathbb{R}^{2n}\times\mathbb{R}$ with the group law
 $$
 x \circ y=\left(x_1+y_1, \ldots, x_{2 n}+y_{2 n}, x_{2 n+1}+y_{2 n+1}+2 \sum_{j=1}^n\left(y_j x_{n+j}-x_j y_{n+j}\right)\right)
 $$
 and
 $$
 \delta_r\left(x_1, x_2, \ldots, x_{2 n}, x_{2 n+1}\right)=\left(r x_1, r x_2, \ldots, r x_{2 n}, r^2 x_{2 n+1}\right), \quad r>0,
 $$
 where $x=(x_1,\cdots,x_{2n},x_{2n+1})$ and $y=(y_1,\cdots,y_{2n},y_{2n+1})$.
 The Haar measure on $\mathbb{H}^n$ coincides with the usual Lebesgue measure on $\mathbb{R}^{2n+1}$. We denote the measure of any measurable set $E \subset \mathbb{H}^n$ by $|E|$.Then
 $$
 |\delta_r(E)|=r^Q|E|, d(\delta_r x)=r_Q d x,
 $$
 where $Q=2n+2$ is called the homogeneous dimension of $\mathbb{H}^n$.
 
 The Heisenberg distance derived from the norm
 $$
 |x|_h=\left[\left(\sum_{i=1}^{2 n} x_i^2\right)^2+x_{2 n+1}^2\right]^{1 / 4},
 $$
 where $x=(x_1,x_2,\cdots,x_{2n},x_{2n+1})$,is given by
 $$
 d(p, q)=d\left(q^{-1} p, 0\right)=\left|q^{-1} p\right|_h.
 $$
 This distance $d$ is left-invariant in the sense that $d(p,q)$ remains unchanged when $p$ and $q$ are both left-translated by some fixed vector on $\mathbb{H}^n$.Furthermore,$d$ satisfies the triangular inequality\cite{AK}
 $$
 d(p, q) \leq d(p, x)+d(x, q), \quad p, x, q \in \mathbb{H}^n.
 $$
 For $r>0$ and $x\in\mathbb{H}^n$,the ball and sphere with center$x$ and radius $r$ on $\mathbb{H}^n$ are given by
 $$
 B(x, r)=\left\{y \in \mathbb{H}^n: d(x, y)<r\right\},
 $$
 and
 $$
 S(x, r)=\left\{y \in \mathbb{H}^n: d(x, y)=r\right\},
 $$
 respectively.And we have
 $$
 |B(x, r)|=|B(0, r)|=\Omega_Q r^Q,
 $$
 where
 $$
 \Omega_Q=\frac{2 \pi^{n+\frac{1}{2}} \Gamma(n / 2)}{(n+1) \Gamma(n) \Gamma((n+1) / 2)}
 $$
 is the volume of the unit ball $B(0,1)$ on $\mathbb{H}^n$, $\omega_Q=Q\Omega_Q$ (see\cite{CT}).
     More about Heisenberg group can refer to \cite{Zhao2}, \cite{GB} and \cite{ST}.
   
     The $n$-dimensional Hardy operator and its dual operator on Heisenberg group is defined by Wu and Fu \cite{Fu}
     \begin{equation}
     	H_hf(x):=\frac{1}{\Omega_Q|x|^Q_h}\int_{|y|_h<|x|_h}f (y)dy, H_h*f(x):=\int_{|y|_h\geq|x|_h}\frac{f(y)}{\Omega_Q|y|_h^Q}dy,
     \end{equation}
     where $x\in\mathbb{R}^n\backslash\{0\}$, $f$ be a locally integrable function on $\mathbb{H}^n$. They proved $\mathcal{H}_h$ and $\mathcal{H}_h^*$ is bounded from $L^p(\mathbb{H}^n)$ to $L^p(\mathbb{H}^n)$, $1<p\leq\infty$. Moreover,
     \begin{equation}
     \|\mathcal{H}_h\|_{L^p(\mathbb{H}^n)}=\frac{p}{p-1}\|f\|_{L^p(\mathbb{H}^n)}, \|\mathcal{H}^*_h\|_{L^p(\mathbb{H}^n)}=p\|f\|_{L^p(\mathbb{H}^n)}.
     \end{equation}
     This is the same as the result on $\mathbb{R}^n$.
     
     In \cite{Chu}, Chu et al. defined the $n$-dimensional weighted Hardy operator on Heisenberg group$\mathcal{H}_{hw}$ and $n$-dimensional weighted Ces$\grave{a}$ro operator on Heisenberg group $\mathcal{H}^*_{hw}$. Let us recall their definition.
     \begin{defn}
     	Let $w:[0,1]\rightarrow[0,\infty)$ be a function, for a measurable function $f$ on $\mathbb{H}^n$. The $n$-dimensional weighted Hardy operator on Heisenberg group $\mathcal{H}_{hw}$ is defined by 
     	$$
     	\mathcal{H}_{hw}:=\int_{0}^{1}f(\delta_t x)w(t)dt, x\in\mathbb{H}^n. 
     	$$
     \end{defn}   
     \begin{defn}
     	For a nonnegative function $w:[0,1]\rightarrow(0,\infty)$ and measurable complex-valued function $f$ on $\mathbb{H}^n$, the $n$-dimensional weighted  Ces\`{a}ro operator is defined by 
     	$$
     	\mathcal{H}^*_{hw}:=\int_{0}^{1}\frac{f(\delta_{1/t}x)}{t^Q}w(t)dt, x\in\mathbb{H}^n,
     	$$
     	which satisfies
     	$$
     	\int_{\mathbb{H}^n}f(x)(\mathcal{H}_{hw}g)(x)dx=\int_{\mathbb{H}^n}g(x)(\mathcal{H}^*_{hw})(x)dx,
     	$$
     	where $f\in L^p(\mathbb{H}^n), g\in L^q(\mathbb{H}^n),1<p<\infty,q=p/(p-1)$, $\mathcal{H}_{hw}$ is bounded on $L^p(\mathbb{H}^n)$, and $\mathcal{H}^*_{hw}$ is bounded on $L^q(\mathbb{H}^n)$.  
     \end{defn}
      Recently, many operators in harmonic analysis have been proved to be bounded on mixed radial-angular spaces, for instance, Duoandikoetxea and Oruetxebarria \cite{JD} built the extrapolation theorems on mixed radial-angular spaces to study the boundedness of a large class of operators which are weighted bounded. In \cite{WMQ}, Wei and Yan studied the sharp bounds for $n$-dimensional Hardy operator and its dual in mixed radial-angular spaces on Euclidean space. Inspired by them, we will  investigate the sharp bounds for $n$-dimensional Hardy operator and its dual operator in mixed radial-angular spaces on Heisenberg groups.
      
      Now, we give the definition of mixed radial-angular spaces on Heisenberg group.
      \begin{defn}
      	For any $n\geq2$,$1\leq p$,$\bar{p}\leq\infty$, the mixed radial-angular space $L^p_{|x|_h} L^{\bar{p}}_\theta(\mathbb{H}^n)$ consists of all functions $f$ in $\mathbb{H}^n$ for which 
      	$$
      	\|f\|_{L^p_{|x|}L^{\bar{p}}_\theta(\mathbb{H}^n)}:=\left(\int_0^\infty \left(\int_{\mathbb{S}^{Q-1}}|f(r,\theta)|^pd \theta\right)^{\frac{p}{\bar{p}}}r^{Q-1}dr\right)^{\frac{1}{p}}<\infty,
      	$$ 
      	where $S^{Q-1}$ denotes the unit sphere in $\mathbb{H}^n$.
      \end{defn} 
     Next, we will provide the main results of this article.
     \section{Mixed radial-angular bounds for $\mathcal{H}_h$ and $\mathcal{H}^*_h$.  }
\begin{thm}\label{thm1}
	Let $n\geq2,1<p, \bar{p}_1, \bar{p}_2<\infty$. Then $\mathcal{H}_h$ is bounded from $L_{|x|_h}^p L_\theta^{\bar{p}_1}(\mathbb{H}^n) $ to $ L_{|x|_h}^p L_\theta^{\bar{p}_2}(\mathbb{H}^n)$.Moreover,
	$$
	\|\mathcal{H}_h\|_{L_{|x|_h}^p L_\theta^{\bar{p}_1}(\mathbb{H}^n) \rightarrow L_{|x|_h}^p L_\theta^{\bar{p}_2}(\mathbb{H}^n)}=\left(\frac{pQ^{1/\bar{p}_2-1/\bar{p}_1}}{p-1}\right)\left(\frac{2 \pi^{n+\frac{1}{2}} \Gamma(n / 2)}{(n+1) \Gamma(n) \Gamma((n+1) / 2)}\right)^{1/\bar{p}_2-1/\bar{p}_1} .
	$$
\end{thm}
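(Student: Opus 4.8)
The plan is to exploit the decisive structural fact that $\mathcal{H}_h f$ is always a \emph{radial} function. Since both the normalizing factor $1/(\Omega_Q|x|_h^Q)$ and the integral $\int_{|y|_h<|x|_h}f(y)\,dy$ depend on $x$ only through $r=|x|_h$, the value $\mathcal{H}_h f(x)$ does not depend on the angular variable $\theta$. The cleanest way to see this is the dilation substitution $y=\delta_r u$, under which $dy=r^Q\,du$ and $|y|_h=r|u|_h$, giving
$$
\mathcal{H}_h f(x)=\frac{1}{\Omega_Q}\int_{|u|_h<1}f(\delta_{r}u)\,du,\qquad r=|x|_h .
$$
First I would record the homogeneous polar decomposition $\int_{\mathbb{H}^n}g\,dy=\int_0^\infty\int_{\mathbb{S}^{Q-1}}g(\delta_s\theta)s^{Q-1}\,d\theta\,ds$ and use it to write $\mathcal{H}_h f(r)=\frac{1}{\Omega_Q r^Q}\int_0^r\int_{\mathbb{S}^{Q-1}}f(s,\theta)s^{Q-1}\,d\theta\,ds$, where $f(s,\theta)=f(\delta_s\theta)$.

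Because $\mathcal{H}_h f$ is radial, its target mixed norm collapses: the inner angular integrand is constant in $\theta$, so
$$
\|\mathcal{H}_h f\|_{L^p_{|x|_h}L^{\bar p_2}_\theta(\mathbb{H}^n)}=\omega_Q^{1/\bar p_2}\left(\int_0^\infty|\mathcal{H}_h f(r)|^p r^{Q-1}\,dr\right)^{1/p},
$$
with $\omega_Q=Q\Omega_Q$ the surface measure of $\mathbb{S}^{Q-1}$. To control the remaining radial integral I would apply H\"older's inequality on the sphere with exponents $\bar p_1,\bar p_1'$ to the inner integral, obtaining $\int_{\mathbb{S}^{Q-1}}|f(s,\theta)|\,d\theta\le\omega_Q^{1/\bar p_1'}F(s)$ where $F(s):=\bigl(\int_{\mathbb{S}^{Q-1}}|f(s,\theta)|^{\bar p_1}\,d\theta\bigr)^{1/\bar p_1}$, so that $\|f\|_{L^p_{|x|_h}L^{\bar p_1}_\theta}=\|F\|_{L^p(r^{Q-1}dr)}$. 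This reduces the whole estimate to the one-dimensional weighted Hardy operator $TF(r)=r^{-Q}\int_0^r F(s)s^{Q-1}\,ds$ acting on $L^p((0,\infty),r^{Q-1}\,dr)$.

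The analytic core is then the sharp bound $\|T\|_{L^p(r^{Q-1}dr)\to L^p(r^{Q-1}dr)}=\frac{p}{Q(p-1)}$, which I would establish by testing power functions $F(s)=s^{-\gamma}$ (for which $TF=\frac{1}{Q-\gamma}F$) and identifying the critical exponent $\gamma=Q/p$; equivalently, $T$ is the radial restriction of the $Q$-dimensional Hardy operator, whose $L^p$ norm $\frac{p}{p-1}$ is already recorded in the introduction. Collecting the constants and using $\omega_Q=Q\Omega_Q$, the accumulated factors combine as
$$
\omega_Q^{1/\bar p_2}\cdot\omega_Q^{1/\bar p_1'}\cdot\frac{1}{\Omega_Q}\cdot\frac{p}{Q(p-1)}=\frac{p}{p-1}\,\omega_Q^{1/\bar p_2-1/\bar p_1},
$$
which is exactly the claimed constant once one writes $\omega_Q=Q\Omega_Q$ with $\Omega_Q=\frac{2\pi^{n+1/2}\Gamma(n/2)}{(n+1)\Gamma(n)\Gamma((n+1)/2)}$.

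For sharpness I would exhibit near-extremizers that turn every inequality above into an asymptotic equality. Taking $f$ radial makes the H\"older step on the sphere an exact identity, and the radial profile $f_\epsilon(y)=|y|_h^{-Q/p-\epsilon}\mathbf{1}_{\{|y|_h>1\}}$ is a standard extremizing sequence for $T$; a direct computation gives
$$
\frac{\|\mathcal{H}_h f_\epsilon\|_{L^p_{|x|_h}L^{\bar p_2}_\theta}}{\|f_\epsilon\|_{L^p_{|x|_h}L^{\bar p_1}_\theta}}\longrightarrow \frac{p}{p-1}\,\omega_Q^{1/\bar p_2-1/\bar p_1}\qquad(\epsilon\to0^+).
$$
I expect the main obstacle to be bookkeeping rather than conceptual: tracking the powers of $\omega_Q$ and $\Omega_Q$ through the H\"older and Hardy steps so that the dimensional factor lands precisely on $Q^{1/\bar p_2-1/\bar p_1}\Omega_Q^{1/\bar p_2-1/\bar p_1}$, and checking that the lower-order term $r^{-Q}$ produced by the truncation in $f_\epsilon$ does not perturb the limiting ratio.
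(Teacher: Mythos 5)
Your proposal is correct and takes essentially the same route as the paper's own proof: H\"older's inequality on the sphere to pass from $f$ to its radial profile (the paper packages this as the rotation-method reduction $f\mapsto g=\frac{1}{\omega_Q}\int_{\mathbb{S}^{Q-1}}f(\delta_{|x|_h}\theta)\,d\theta$ with $\mathcal{H}_hg=\mathcal{H}_hf$ and $\|g\|\leq\|f\|$), Minkowski's integral inequality for the dilation average (your operator $TF(r)=\int_0^1F(rt)t^{Q-1}\,dt$ is exactly the paper's $\frac{1}{\Omega_Q}\int_{B(0,1)}f(\delta_r y)\,dy$ written in polar form), and the identical truncated power functions $f_\epsilon(y)=|y|_h^{-Q/p-\epsilon}\mathbf{1}_{\{|y|_h>1\}}$ for sharpness, with equality in the spherical H\"older step for radial data. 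Your constant bookkeeping $\omega_Q^{1/\bar{p}_2}\cdot\omega_Q^{1/\bar{p}_1'}\cdot\Omega_Q^{-1}\cdot\frac{p}{Q(p-1)}=\frac{p}{p-1}\,\omega_Q^{1/\bar{p}_2-1/\bar{p}_1}$ checks out against the stated norm, since $\omega_Q=Q\Omega_Q$.
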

\begin{thm}\label{thm2}
	Let $n\geq2,1<p, \bar{p}_1, \bar{p}_2<\infty$, then $\mathcal{H}_h^*$ is bounded from $L_{|x|_h}^p L_\theta^{\bar{p}_1}(\mathbb{H}^n) $ to $ L_{|x|_h}^p L_\theta^{\bar{p}_2}(\mathbb{H}^n)$.Moreover,
	$$
	\|\mathcal{H}^*_h\|_{L_{|x|_h}^p L_\theta^{\bar{p}_1}(\mathbb{H}^n) \rightarrow L_{|x|_h}^p L_\theta^{\bar{p}_2}(\mathbb{H}^n)}= pQ^{1/\bar{p}_2-1/\bar{p}_1}\left(\frac{2 \pi^{n+\frac{1}{2}} \Gamma(n / 2)}{(n+1) \Gamma(n) \Gamma((n+1) / 2)}\right)^{1/\bar{p}_2-1/\bar{p}_1}.
	$$
\end{thm}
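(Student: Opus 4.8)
The plan is to apply the rotation method to collapse the $(2n{+}1)$-dimensional operator $\mathcal{H}_h^*$ onto a one-dimensional weighted Hardy-type inequality. Writing each $x\in\mathbb{H}^n\setminus\{0\}$ in polar form $x=\delta_r\theta$ with $r=|x|_h$ and $\theta\in\mathbb{S}^{Q-1}$, the Haar measure factorizes as $dx=r^{Q-1}\,dr\,d\theta$, and one checks $\int_{\mathbb{S}^{Q-1}}d\theta=Q\Omega_Q=\omega_Q$. The decisive observation is that both the domain $\{|y|_h\ge|x|_h\}$ and the integrand of $\mathcal{H}_h^*$ depend on $x$ only through $|x|_h$, so $\mathcal{H}_h^*f$ is automatically a radial function. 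Passing to polar coordinates gives
$$\mathcal{H}_h^*f(x)=\frac{1}{\Omega_Q}\int_{r}^{\infty}\frac{F(s)}{s}\,ds=:g(r),\qquad F(s):=\int_{\mathbb{S}^{Q-1}}f(\delta_s\phi)\,d\phi,$$
so the angular behaviour of the output is trivial and only the spherical average $F$ of the input survives.

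Since $g$ is radial, the target norm is immediate: $\int_{\mathbb{S}^{Q-1}}|g(r)|^{\bar p_2}\,d\theta=\omega_Q|g(r)|^{\bar p_2}$, whence $\|\mathcal{H}_h^*f\|_{L^p_{|x|_h}L^{\bar p_2}_\theta}=\omega_Q^{1/\bar p_2}\big(\int_0^\infty|g(r)|^p r^{Q-1}\,dr\big)^{1/p}$. To control $F$ I would apply H\"older's inequality on $\mathbb{S}^{Q-1}$, obtaining $|F(s)|\le\omega_Q^{1-1/\bar p_1}\Phi(s)$ with $\Phi(s):=\big(\int_{\mathbb{S}^{Q-1}}|f(\delta_s\phi)|^{\bar p_1}\,d\phi\big)^{1/\bar p_1}$, and I note $\|f\|_{L^p_{|x|_h}L^{\bar p_1}_\theta}=\big(\int_0^\infty\Phi(s)^p s^{Q-1}\,ds\big)^{1/p}$. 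The core analytic input is the sharp one-dimensional dual Hardy inequality in the weighted space $L^p(r^{Q-1}\,dr)$, namely $\big(\int_0^\infty(\int_r^\infty \tfrac{\Phi(s)}{s}\,ds)^p r^{Q-1}\,dr\big)^{1/p}\le\frac{p}{Q}\big(\int_0^\infty\Phi(s)^p s^{Q-1}\,ds\big)^{1/p}$, which I would prove by the substitution $v=s^Q$: this turns the weight $r^{Q-1}\,dr$ into Lebesgue measure and reduces the estimate to the classical bound $\|H^*\|_{L^p\to L^p}=p$ recalled in the introduction, producing the factor $\tfrac{p}{Q}$. Chaining the three steps and using $\omega_Q=Q\Omega_Q$ collapses the constants to exactly $p\,\omega_Q^{1/\bar p_2-1/\bar p_1}=pQ^{1/\bar p_2-1/\bar p_1}\Omega_Q^{1/\bar p_2-1/\bar p_1}$, which is the asserted value.

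For the reverse inequality I would test with the radial family $f_\epsilon(x)=|x|_h^{-Q/p-\epsilon}\mathbf{1}_{\{|x|_h>1\}}$ for small $\epsilon>0$. Being radial and nonnegative, $f_\epsilon$ makes the H\"older step and the Hardy step asymptotically sharp at once. A direct computation gives $\|f_\epsilon\|_{L^p_{|x|_h}L^{\bar p_1}_\theta}=\omega_Q^{1/\bar p_1}(\epsilon p)^{-1/p}$, while $g_\epsilon(r)=\frac{Q}{Q/p+\epsilon}r^{-Q/p-\epsilon}$ for $r\ge1$ (and constant for $r<1$) yields $\big(\int_0^\infty|g_\epsilon|^p r^{Q-1}\,dr\big)^{1/p}=p(\epsilon p)^{-1/p}(1+o(1))$ as $\epsilon\to0$, the $r<1$ contribution being lower order. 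Taking the quotient of the two norms and letting $\epsilon\to0$ drives the ratio to $p\,\omega_Q^{1/\bar p_2-1/\bar p_1}$, matching the upper bound.

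The step I expect to demand the most care is the sharpness argument: one must verify that the single radial family $f_\epsilon$ forces both inequalities (H\"older in $\theta$ and the weighted dual Hardy inequality in $r$) to become equalities in the limit, and that the truncation at $|x|_h=1$, needed to keep $f_\epsilon$ in $L^p_{|x|_h}L^{\bar p_1}_\theta$, contributes only lower-order terms in $\epsilon$ to both norms. The upper bound itself is essentially mechanical once the radiality of $\mathcal{H}_h^*f$ is recognized, the only delicate point being the correct bookkeeping of the constants $\omega_Q$, $\Omega_Q=\tfrac{2\pi^{n+1/2}\Gamma(n/2)}{(n+1)\Gamma(n)\Gamma((n+1)/2)}$, and the factor $p/Q$ so that they combine into $p\,\omega_Q^{1/\bar p_2-1/\bar p_1}$.
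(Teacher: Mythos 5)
Your proposal is correct and lands on exactly the claimed constant $p\,\omega_Q^{1/\bar{p}_2-1/\bar{p}_1}=pQ^{1/\bar{p}_2-1/\bar{p}_1}\Omega_Q^{1/\bar{p}_2-1/\bar{p}_1}$, but the key upper-bound step differs from the paper's. The paper proves Theorem 1 in detail and declares Theorem 2 analogous; its template first reduces to radial inputs by comparing $f$ with its spherical average $g$ (H\"older on $\mathbb{S}^{Q-1}$ gives $\|g\|\leq\|f\|$ while the operator does not distinguish $f$ from $g$), and then gets the norm bound from Minkowski's integral inequality applied to the dilation form of the operator, evaluating an explicit radial integral (for the dual operator this would be $\int_{|y|_h\geq 1}|y|_h^{-Q-Q/p}\,dy=p\,\Omega_Q$); no one-dimensional Hardy inequality is ever invoked. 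You instead observe that $\mathcal{H}_h^*f$ is automatically radial, apply H\"older inline to the spherical average $F$, and transfer the resulting radial estimate to the classical sharp bound $\|H^*\|_{L^p\to L^p}=p$ through the substitution $v=s^Q$, which correctly produces the factor $p/Q$ and, after the bookkeeping with $\omega_Q=Q\Omega_Q$, the stated constant. This transference route buys a reusable one-dimensional lemma and makes the origin of the constant $p$ transparent, while the paper's Minkowski computation is self-contained and runs verbatim parallel to Theorem 1. Your extremal family is exactly the paper's $f_\epsilon(x)=|x|_h^{-Q/p-\epsilon}\mathbf{1}_{\{|x|_h>1\}}$, and since $\mathcal{H}_h^*f_\epsilon$ is computable in closed form ($g_\epsilon(r)=\tfrac{Q}{Q/p+\epsilon}r^{-Q/p-\epsilon}$ for $r\geq1$, constant below), your lower bound is if anything cleaner than the paper's corresponding manipulation for $\mathcal{H}_h$, which must shrink the region of integration before estimating; the caution you flag about the truncation is easily dispatched, as the $r<1$ contribution is $O(1)$ against the divergent main term $(\epsilon p)^{-1}$, and no separate verification of asymptotic equality in the H\"older and Hardy steps is needed once the ratio of norms is computed directly.
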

\begin{proof}[Proof of Theorem \ref{thm1} .]
	Set 
	\begin{equation}
		g(x)=\frac{1}{\omega_Q}\int_{\mathbb{S}^{Q-1}}f(\delta_{|x|_h}\theta)d\theta, x\in\mathbb{H}^n, 
	\end{equation}
	then $g$ is a radial function. Moreover, we have
	$$
	\begin{aligned}
		\|g\|_{L^p_{|x|_h}L^{\bar{p}_1}_\theta(\mathbb{H}^n)}&=\left(\int_0^\infty\left(\int_{\mathbb{S}^{Q-1}}|g(r,\theta)|^{\bar{p}_1}d\theta\right)^{p/\bar{p}_1}r^{Q-1}d r\right)^{1/p}\\
		&=\left(\int_0^\infty\left(\frac{1}{\omega_Q}|g(r)|^{\bar{p}_1}\right)^{p/\bar{p}_1}r^{Q-1}d r\right)^{1/p}\\
		&=\omega_Q^{1/\bar{p}_1}\left(\int_0^\infty|g(r)|^pr^{Q-1}d r\right)^{1/p},
		\end{aligned}
	$$
	where $g(r)$ can be defined as $g(r)=g(x)$ for any $x\in\mathbb{H}^n$ with $|x|_h=r$.
	By using H\"{o}lder inequality, we have 
	$$	
	\begin{aligned}
		\|g\|_{L^p_{|x|_h}L^{\bar{p}_1}_\theta(\mathbb{H}^n)}&=\omega_Q^{1/\bar{p}_1}\left(\int_{0}^{\infty}\left|\frac{1}{\omega_Q}\int_{S^{Q-1}}f(\delta_r\theta)d\theta\right|^pr^{Q-1}dr\right)^{1/p}\\
		&=\omega_Q^{1/\bar{p}_1-1}\left(\int_{0}^{\infty}\left|\int_{S^{Q-1}}f(\delta_r\theta)d\theta\right|^pr^{Q-1}dr\right)^{1/p}\\
		&\leq\omega_Q^{1/p-1}\left(\int_0^\infty\left(\int_{S^{Q-1}}|f(\delta_r\theta)|^{\bar{p}_1}d\theta\right)^{p/\bar{p}_1}\left(\int_{S^{Q-1}}d\theta\right)^{p/\bar{p}_1}r^{Q-1}dr\right)^{1/p}\\
		&=\left(\int_0^\infty\left(\int_{S^{Q-1}}|f(\delta_r\theta)|^{\bar{p}_1}d\theta\right)^{p/\bar{p}_1}r^{Q-1}dr\right)^{1/p}\\
		&=\|f\|_{L^p_{|x|_h}L^{\bar{p}_1}_\theta(\mathbb{H}^n)}.
	\end{aligned}
	$$ 
	By change of variables, we can get 
	$$
	\begin{aligned}
	\mathcal{H}_hg(x)&=\frac{1}{\Omega_Q|x|_h^Q}\int_{|y|_h<|x|_h}\left(\frac{1}{\omega_Q}\int_{\mathbb{S}^{Q-1}}f(\delta_{|x|_h\theta})d\theta\right)dy\\
	&=\frac{1}{\Omega_Q|x|_h^Q}\int_0^1\int_{S(0,1)}\left(\frac{1}{\omega_Q}\int_{\mathbb{S}^{Q-1}}f(\delta_{r}\theta)d\theta\right)r^{Q-1}dy^{'}dr\\
    &=\frac{1}{\Omega_Q|x|_h^Q}\int_0^1\int_{S(0,1)}f(\delta_r \theta)r^{Q-1}d\theta d r\\
    &=\mathcal{H}_hf(x).
	\end{aligned}
	$$
	Thus, we obtain
	$$
	\frac{\|\mathcal{H}_h(f)\|_{L_{|x|_h}^p L_\theta^{\bar{p}_2}(\mathbb{H}^n)}}{\|f\|_{L_{|x|_h}^p L_\theta^{\bar{p}_1}(\mathbb{H}^n)}} \leq \frac{\|\mathcal{H}_h(g)\|_{L_{|x|_h}^p L_\theta^{\bar{p}_2}(\mathbb{H}^n)}}{\|g\|_{L_{|x|_h}^p L_\theta^{\bar{p}_1}(\mathbb{H}^n)}}.
	$$
	This implies the operator $\mathcal{H}$ and its restriction to radial function have same norm from $L_{|x|_h}^p L_\theta^{\bar{p}_1}$ to $L_{|x|_h}^p L_\theta^{\bar{p}_2}$, without loss of generality, we can assume that $f$ is a radial function in the rest of the proof.
	Consequently, we have 
	$$
	\begin{aligned}
	\|\mathcal{H}_hf\|_{L_{|x|_h}^p L_\theta^{\bar{p}_2}(\mathbb{H}^n)}&=\left(\int_0^\infty\left(\int_{S^{Q-1}}|\mathcal{H}_h(f)(r,\theta)|^{\bar{p}_2}d\theta\right)^{p/\bar{p}_2}r^{Q-1}d r\right)^{\frac{1}{p}}\\
	&=\left(\int_0^\infty\left(\int_{S^{Q-1}}|\mathcal{H}_h(f)(r)|^{\bar{p}_2}d\theta\right)^{p/\bar{p}_2}r^{Q-1}d r\right)^{1/p}\\
	&=\omega_Q^{1/\bar{p}_2}\left(\int_0^\infty|\mathcal{H}_h(f)(r)|^pr^{Q-1}dr\right)^{1/p},
	\end{aligned}
	$$
	where $\mathcal{H}_h(f)(r)$ can be defined as $\mathcal{H}_h(f)(r)=\mathcal{H}_h(f)(x)$ for any $|x|_h=r$.
	Next, we use another form of Hardy operator 
	$$
	\mathcal{H}_h(f)(r)=\frac{1}{|B(0,r)|}\int_{B(0,r)}f( y)dy,x\in\mathbb{H}^n\backslash\{0\},
	$$
	by changing variables, we have
	$$
	\mathcal{H}_h(f)(r)=\frac{1}{\Omega_Q}\int_{B(0,1)}f(\delta_ry)dy,x\in\mathbb{H}^n\backslash\{0\}.
	$$
	Using Minkowski's inequality, we can get
	$$
	\begin{aligned}
	\|\mathcal{H}_hf\|_{L_{|x|_h}^p L_\theta^{\bar{p}_2}(\mathbb{H}^n)}&=\omega_Q^{1/\bar{p}_2}\left(\int_0^\infty\left|\frac{1}{\omega_Q}\int_{B(0,1)}f(\delta_{r}y)dy\right|^p r^{Q-1}dr\right)^{1/p}\\
	&=\frac{\omega_Q^{1/\bar{p}_2}}{\Omega_Q}\left(\int_0^\infty\left|\int_{B(0,1)}f(\delta_{r}y)dy\right|^p r^{Q-1}dr\right)^{1/p}\\
	&\leq\frac{\omega_Q^{1/\bar{P}_2}}{\Omega_Q}\int_{B(0,1)}\left(\int_0^\infty|f(\delta_{|y|_h}r )|^pr^{Q-1}dr\right)^{1/p}dy\\
	&=\frac{\omega_Q^{1/\bar{P}_2}}{\Omega_Q}\int_{B(0,1)}\left(\int_{0}^{\infty}|f(r)|^pr^{Q-1}dr\right)^{1/p}|y|_h^{-Q/p}dy\\
	&=\frac{\omega_Q^{1/\bar{p}_2-1/\bar{p}_1}}{\Omega_Q}\int_{B(0,1)}\left(\int_{0}^{\infty}\omega_Q^{p/\bar{p}_1}|f(r)|^pr^{Q-1}dr\right)^{1/p}|y|_h^{-Q/p}dy\\
	&=\frac{\omega_Q^{1/\bar{p}_2-1/\bar{p}_1}}{\Omega_Q}\int_{B(0,1)}|y|_h^{-Q/p}dy\|f\|_{L^p_{|x|_h}L^{\bar{p}_1}_\theta}\\
	&=\frac{p}{p-1}\omega_Q^{1/\bar{p}_2-1/\bar{p}_1}\|f\|_{L^p_{|x|_h}L^{\bar{p}_1}_\theta}\\
	&=\left(\frac{pQ^{1/\bar{p}_2-1/\bar{p}_1}}{p-1}\right)\left(\frac{2 \pi^{n+\frac{1}{2}} \Gamma(n / 2)}{(n+1) \Gamma(n) \Gamma((n+1) / 2)}\right)^{1/\bar{p}_2-1/\bar{p}_1}\|f\|_{L^p_{|x|_h}L^{\bar{p}_1}_\theta}.
	\end{aligned}
	$$
	Therefore, we have
	\begin{equation}\label{leq}
		\|\mathcal{H}_hf\|_{L_{|x|_h}^p L_\theta^{\bar{p}_2}(\mathbb{H}^n)}\leq\left(\frac{pQ^{1/\bar{p}_2-1/\bar{p}_1}}{p-1}\right)\left(\frac{2 \pi^{n+\frac{1}{2}} \Gamma(n / 2)}{(n+1) \Gamma(n) \Gamma((n+1) / 2)}\right)^{1/\bar{p}_2-1/\bar{p}_1}\|f\|_{L^p_{|x|_h}L^{\bar{p}_1}_\theta}.
	\end{equation}
	On the other hand, for $0<\epsilon<1$, take 
	$$
	f_\epsilon(x)= \begin{cases}0, & |x|_h \leq 1, \\ |x|_h^{-\left(\frac{Q}{p}+\epsilon\right)} & |x|_h>1\end{cases},
	$$
	then
	$$
	\|f_\epsilon\|^p_{L^p_{|x|_h}L^{\bar{p}_1}_\theta}=\frac{\omega_Q^{p/\bar{p}_1}}{p\epsilon},
	$$
	and
	$$
	\mathcal{H}_h(f_\epsilon)(x)= \begin{cases}0, & |x|_h \leq 1, \\ \Omega_Q^{-1}|x|_h^{-{\frac{Q}{p}}-\epsilon} \int_{|x|_h^{-1}<|y|_h <1}|y|_h^{-\frac{Q}{p}-\epsilon} d y, & |x|_h>1\end{cases}.
	$$
	So, we have
	$$
	\begin{aligned}
	\|\mathcal{H}_h(f_\epsilon)\|_{L^p_{|x|_h}L^{\bar{p}_2}_\theta(\mathbb{H}^n)}&=\frac{\omega_Q^{1/\bar{p}_2}}{\Omega_Q}\left(\int_{r>1}\left|r^{-\frac{Q}{p}-\epsilon}\int_{r^{-1}<|y|_h<1}|y|_h^{-\frac{Q}{p}-\epsilon}dy\right|^pr^{Q-1}dr\right)^{1/p}\\
	&\geq\frac{\omega_Q^{1/\bar{p}_2}}{\Omega_Q}\left(\int_{r>\frac{1}{\epsilon}}\left|r^{-\frac{Q}{p}-\epsilon}\int_{\epsilon<|y|_h<1}|y|_h^{-\frac{Q}{p}-\epsilon}dy\right|^pr^{Q-1}dr\right)^{1/p}\\
	&=\frac{\omega_Q^{1/\bar{p}_2}}{\Omega_Q}\left(\int_{r>\frac{1}{\epsilon}}r^{-p\epsilon-Q}dr\right)^{1/p}\int_{\epsilon<|y|_h<1}|y|_h^{-\frac{Q}{p}-\epsilon}dy\\
	&=\frac{\omega_Q^{1+1/\bar{p}_2}}{\Omega_Q}\left(\int_{r>\frac{1}{\epsilon}}r^{-p\epsilon-Q}dr\right)^{1/p}\int_{1}^{\epsilon}r^{Q-1-\frac{Q}{p}-\epsilon}dr\\
	&=\epsilon^\epsilon\frac{1-\epsilon^{Q-\frac{Q}{p}-\epsilon}}{1-\frac{1}{p}-\frac{\epsilon}{Q}}\omega_Q^{1/\bar{p}_2-1/\bar{p}_1}\|f_\epsilon\|_{L^p_{|x|_h}L^{\bar{p}_1}_\theta}.
	\end{aligned}
	$$
	Thus, we have obtained
	$$
\|\mathcal{H}_h\|_{L_{|x|_h}^p L_\theta^{\bar{p}_1(\mathbb{H}^n)} \rightarrow L_{|x|_h}^p L_\theta^{\bar{p}_2}(\mathbb{H}^n)}\geq\epsilon^\epsilon\frac{1-\epsilon^{Q-\frac{Q}{p}-\epsilon}}{1-\frac{1}{p}-\frac{\epsilon}{Q}}\omega_Q^{1/\bar{p}_2-1/\bar{p}_1}\|f_\epsilon\|_{L^p_{|x|_h}L^{\bar{p}_1}_\theta}.
	$$
	Since $\epsilon^\epsilon\rightarrow1$ as $\epsilon\rightarrow0$, by letting $\epsilon\rightarrow0$, we have
	\begin{equation}\label{geq}
		\begin{aligned}
\|\mathcal{H}_h\|_{L_{|x|_h}^p L_\theta^{\bar{p}_1}(\mathbb{H}^n}&\geq\frac{p}{p-1}\omega_Q^{1/\bar{p}_2-1/\bar{p}_1}\\
&=\left(\frac{pQ^{1/\bar{p}_2-1/\bar{p}_1}}{p-1}\right)\left(\frac{2 \pi^{n+\frac{1}{2}} \Gamma(n / 2)}{(n+1) \Gamma(n) \Gamma((n+1) / 2)}\right)^{1/\bar{p}_2-1/\bar{p}_1}\|f\|_{L^p_{|x|_h}L^{\bar{p}_1}_\theta}.
         \end{aligned}	
	\end{equation}
	Combine (\ref{leq}) and (\ref{geq}), we can get 
	$$
		\|\mathcal{H}_hf\|_{L_{|x|_h}^p L_\theta^{\bar{p}_2}(\mathbb{H}^n)}=\left(\frac{pQ^{1/\bar{p}_2-1/\bar{p}_1}}{p-1}\right)\left(\frac{2 \pi^{n+\frac{1}{2}} \Gamma(n / 2)}{(n+1) \Gamma(n) \Gamma((n+1) / 2)}\right)^{1/\bar{p}_2-1/\bar{p}_1}\|f\|_{L^p_{|x|_h}L^{\bar{p}_1}_\theta}.
	$$
	This completes the proof of Theorem \ref{thm1}.
	\end{proof}
	\begin{proof}[Proof of Theorem \ref{thm2}.]
		The proof of Theorem \ref{thm2} is similar to prove of Theorem \ref{thm1}, we omit the details.
	\end{proof}
\section{Mixed radial-angular bounds for $\mathcal{H}_{hw}$ and $\mathcal{H}^*_{hw}$.} 
\begin{thm}\label{thm3}
Let $w:[0,1]\rightarrow(0,\infty)$ be a function, $n\leq2, 1<p,\bar{p}_1,\bar{p}_2<\infty$. Then the $n$-dimensional weighted Hardy operator on Heisenberg group $\mathcal{H}_{hw}$ is bounded from $L_{|x|_h}^p L_\theta^{\bar{p}_1}(\mathbb{H}^n) $ to $ L_{|x|_h}^p L_\theta^{\bar{p}_2}(\mathbb{H}^n)$. Moreover,
$$\begin{aligned}
\|\mathcal{H}_{hw}\|_{L_{|x|_h}^p L_\theta^{\bar{p}_1}(\mathbb{H}^n) \rightarrow L_{|x|_h}^p L_\theta^{\bar{p}_2}(\mathbb{H}^n)}=& Q^{1/\bar{p}_2-1/\bar{p}_1}\left(\frac{2 \pi^{n+\frac{1}{2}} \Gamma(n / 2)}{(n+1) \Gamma(n) \Gamma((n+1) / 2)}\right)^{1/\bar{p}_2-1/\bar{p}_1}\\
&\times\int_{0}^{1}t^{-\frac{Q}{p}}w(t)dt.
\end{aligned}$$
\end{thm}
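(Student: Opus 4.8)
The plan is to exploit the fact that the anisotropic dilation $\delta_t$ scales the Heisenberg norm by $t$ while fixing the angular variable. Writing $x=(r,\theta)$ with $r=|x|_h$ and $\theta=\delta_{1/r}x\in\mathbb{S}^{Q-1}$ (note $|\delta_{1/r}x|_h=1$), one has $\delta_t x=(tr,\theta)$, so that
$$\mathcal{H}_{hw}f(r,\theta)=\int_0^1 f(tr,\theta)\,w(t)\,dt.$$
Thus $\mathcal{H}_{hw}$ acts only on the radial variable and transports the angular profile unchanged. Consequently I expect the operator norm to factor into a one-dimensional weighted Hardy norm in $r$ (producing the factor $\int_0^1 t^{-Q/p}w(t)\,dt$) times a purely angular factor accounting for the change of exponent from $\bar{p}_1$ to $\bar{p}_2$ (producing a power of $\omega_Q=Q\Omega_Q$). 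I would first verify the polar identity above rigorously from the definitions of $\delta_t$ and $|\cdot|_h$, since the whole argument rests on it.

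For the upper bound I would estimate the $L^{\bar{p}_2}_\theta$ angular norm of $\mathcal{H}_{hw}f$ at each fixed radius $r$ by Minkowski's integral inequality, pulling the average $\int_0^1(\cdot)\,w(t)\,dt$ outside the angular norm:
$$\left(\int_{\mathbb{S}^{Q-1}}|\mathcal{H}_{hw}f(r,\theta)|^{\bar{p}_2}d\theta\right)^{1/\bar{p}_2}\le\int_0^1\left(\int_{\mathbb{S}^{Q-1}}|f(tr,\theta)|^{\bar{p}_2}d\theta\right)^{1/\bar{p}_2}w(t)\,dt.$$
Next I would pass from the $L^{\bar{p}_2}_\theta$ norm to the $L^{\bar{p}_1}_\theta$ norm by H\"older's inequality on the sphere $\mathbb{S}^{Q-1}$, which carries total mass $\omega_Q$; this is the step that produces the constant $\omega_Q^{1/\bar{p}_2-1/\bar{p}_1}$ and is exactly where the two angular exponents interact. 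Taking the $L^p(r^{Q-1}dr)$ norm, a second application of Minkowski's inequality together with the substitution $s=tr$ (so that $r^{Q-1}dr=t^{-Q}s^{Q-1}ds$, yielding the factor $t^{-Q/p}$) lets the $w$-integral factor out, giving
$$\|\mathcal{H}_{hw}f\|_{L^p_{|x|_h}L^{\bar{p}_2}_\theta}\le\omega_Q^{1/\bar{p}_2-1/\bar{p}_1}\left(\int_0^1 t^{-Q/p}w(t)\,dt\right)\|f\|_{L^p_{|x|_h}L^{\bar{p}_1}_\theta},$$
and rewriting $\omega_Q=Q\Omega_Q$ reproduces the claimed constant as an upper bound.

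For the matching lower bound I would test against radial power functions, mirroring the construction in the proof of Theorem~\ref{thm1}: for $0<\epsilon<1$ take $f_\epsilon(x)=|x|_h^{-Q/p-\epsilon}$ for $|x|_h>1$ and $f_\epsilon=0$ otherwise. Because $f_\epsilon$ is radial, every inequality above becomes an equality — the angular H\"older step is sharp precisely for functions constant in $\theta$, and the radial part reduces to the one-dimensional weighted Hardy estimate. Computing both norms explicitly and letting $\epsilon\to0$ (so that $\int_0^1 t^{-Q/p-\epsilon}w(t)\,dt\to\int_0^1 t^{-Q/p}w(t)\,dt$) recovers the constant $\omega_Q^{1/\bar{p}_2-1/\bar{p}_1}\int_0^1 t^{-Q/p}w(t)\,dt$ from below, completing the identification of the norm.

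The main obstacle I anticipate is the angular normalization. Unlike $\mathcal{H}_h$, the operator $\mathcal{H}_{hw}$ does not map into radial functions, so one cannot simply replace $f$ by its spherical average and invoke the reduction used in Theorem~\ref{thm1}; the commutation $R\mathcal{H}_{hw}=\mathcal{H}_{hw}R$ with the radialization $R$ runs the wrong way to transfer an upper bound. The argument therefore hinges entirely on the H\"older comparison between $L^{\bar{p}_1}_\theta$ and $L^{\bar{p}_2}_\theta$ on $\mathbb{S}^{Q-1}$, and on checking that this comparison runs in the direction needed for an upper bound — this is what pins down the correct power of $\omega_Q$ and forces the extremizers to be radial. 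Verifying the polar-coordinate identity for $\delta_t$ on $\mathbb{H}^n$ and the sharpness of the H\"older step are the two places where I would be most careful.
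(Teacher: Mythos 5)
Your proposal is correct, and its skeleton coincides with the paper's (the paper proves only Theorem \ref{thm4} in detail and declares Theorem \ref{thm3} analogous): upper bound via Minkowski's integral inequality and the scaling substitution $s=tr$ that produces the factor $t^{-Q/p}$, lower bound via the family $f_\epsilon=|x|_h^{-Q/p-\epsilon}\chi_{\{|x|_h>1\}}$ with $\epsilon\rightarrow0$. Where you genuinely differ is the treatment of non-radial $f$: the paper simply writes $\|\mathcal{H}^*_{hw}f\|_{L^p_{|x|_h}L^{\bar{p}_2}_\theta}=\omega_Q^{1/\bar{p}_2}\bigl(\int_0^\infty|\mathcal{H}^*_{hw}(f)(r)|^p r^{Q-1}dr\bigr)^{1/p}$, i.e.\ it assumes from the outset that the output is radial, implicitly importing the radialization reduction of Theorem \ref{thm1} even though, as you correctly observe, that reduction does not transfer: $\mathcal{H}_{hw}$ does not depend only on spherical averages, its output is not radial, and the commutation with radialization gives an inequality in the unhelpful direction. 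Your substitute (Minkowski in $\theta$ at fixed $r$, then H\"older on $\mathbb{S}^{Q-1}$ to pass from $L^{\bar{p}_2}_\theta$ to $L^{\bar{p}_1}_\theta$, then Minkowski in $r$) is the honest route and closes that gap, and your remark that the angular H\"older step is an equality for radial functions correctly explains why the radial test functions are extremal. One caveat you should make explicit: the step $\|h\|_{L^{\bar{p}_2}(\mathbb{S}^{Q-1})}\leq\omega_Q^{1/\bar{p}_2-1/\bar{p}_1}\|h\|_{L^{\bar{p}_1}(\mathbb{S}^{Q-1})}$ requires $\bar{p}_2\leq\bar{p}_1$, and this is not an artifact of the method — for $\bar{p}_2>\bar{p}_1$ boundedness genuinely fails, since for $f(r,\theta)=u(r)v(\theta)$ with $v$ the indicator of a small cap $E$ the operator acts only radially and the norm ratio carries the factor $|E|^{1/\bar{p}_2-1/\bar{p}_1}\rightarrow\infty$ as $|E|\rightarrow0$. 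So your argument is complete precisely on the range of exponents where the theorem can hold; the paper's statement and proof leave this hypothesis implicit as well (as they do the finiteness of $\int_0^1 t^{-Q/p}w(t)\,dt$, which both you and the paper handle the same way through the $\epsilon\rightarrow0$ limit).
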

\begin{thm}\label{thm4}
Let $w:[0,1]\rightarrow(0,\infty)$ be a function, $n\leq2, 1<p,\bar{p}_1,\bar{p}_2<\infty$. Then the $n$-dimensional weighted Ces$\grave{a}$ro operator on Heisenberg group $\mathcal{H}^*_{hw}$ is bounded from $L_{|x|_h}^p L_\theta^{\bar{p}_1}(\mathbb{H}^n) $ to $ L_{|x|_h}^p L_\theta^{\bar{p}_2}(\mathbb{H}^n)$. Moreover,
$$\begin{aligned}
\|\mathcal{H}^*_{hw}\|_{L_{|x|_h}^p L_\theta^{\bar{p}_1}(\mathbb{H}^n) \rightarrow L_{|x|_h}^p L_\theta^{\bar{p}_2}(\mathbb{H}^n)}=& Q^{1/\bar{p}_2-1/\bar{p}_1}\left(\frac{2 \pi^{n+\frac{1}{2}} \Gamma(n / 2)}{(n+1) \Gamma(n) \Gamma((n+1) / 2)}\right)^{1/\bar{p}_2-1/\bar{p}_1}\\
&\times\int_{0}^{1}t^{-Q(1-\frac{1}{p})}w(t)dt.
\end{aligned}$$
\end{thm}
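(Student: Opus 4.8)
The plan is to follow the template of the proof of Theorem \ref{thm1}, but to exploit the fact that, unlike $\mathcal{H}_h$, the operator $\mathcal{H}^*_{hw}$ does not output a radial function; instead it acts \emph{along rays}. The starting point is the observation that for $x=\delta_{r}\theta$ with $r=|x|_h$ and $\theta\in\mathbb{S}^{Q-1}$ one has $\delta_{1/t}x=\delta_{r/t}\theta$, so that dilations fix the angular variable and
$$
\mathcal{H}^*_{hw}f(r,\theta)=\int_0^1\frac{f(r/t,\theta)}{t^Q}\,w(t)\,dt .
$$
Thus the operator is diagonal in $\theta$ and behaves, for each fixed direction, like a one–dimensional weighted Hardy-type average in the radial variable; this is why the radialization trick of Theorem \ref{thm1} (which rested on $\mathcal{H}_h f=\mathcal{H}_h g$) is not needed here and I would instead estimate directly.

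For the upper bound I would first apply Minkowski's integral inequality in the angular variable (valid since $\bar{p}_2\geq1$) to get $\|\mathcal{H}^*_{hw}f(r,\cdot)\|_{L^{\bar{p}_2}_\theta}\leq\int_0^1 t^{-Q}\|f(r/t,\cdot)\|_{L^{\bar{p}_2}_\theta}\,w(t)\,dt$, then take the $L^p(r^{Q-1}dr)$ norm and apply Minkowski's inequality again in $r$ to move the $t$-integral outside. The change of variables $s=r/t$ turns the inner radial integral into $t^{Q/p}\|f\|_{L^p_{|x|_h}L^{\bar{p}_2}_\theta}$, and combined with the Jacobian factor $t^{-Q}$ this produces exactly the weight integral $\int_0^1 t^{-Q(1-1/p)}w(t)\,dt$; note that it is the dilation by $1/t$ together with the $t^{-Q}$ normalization that yields the exponent $-Q(1-1/p)$, in contrast to the $-Q/p$ appearing for $\mathcal{H}_{hw}$ in Theorem \ref{thm3}. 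Finally I would convert the remaining $L^{\bar{p}_2}_\theta$ norm of $f$ into the desired $L^{\bar{p}_1}_\theta$ norm by Hölder's inequality on the finite-measure sphere $\mathbb{S}^{Q-1}$, which contributes the factor $\omega_Q^{1/\bar{p}_2-1/\bar{p}_1}$; writing $\omega_Q=Q\Omega_Q$ and inserting the value of $\Omega_Q$ gives the stated constant.

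For the reverse inequality (sharpness) I would test on the near-critical radial family $f_\epsilon(x)=|x|_h^{-Q/p-\epsilon}\mathbf{1}_{\{|x|_h>1\}}$ with $0<\epsilon<1$, for which $\|f_\epsilon\|_{L^p_{|x|_h}L^{\bar{p}_1}_\theta}^p=\omega_Q^{p/\bar{p}_1}/(p\epsilon)$ exactly as in Theorem \ref{thm1}. On the region $|x|_h\geq1$ every $t\in(0,1)$ keeps $|x|_h/t>1$, so $f_\epsilon$ is an approximate eigenfunction,
$$
\mathcal{H}^*_{hw}f_\epsilon(x)=f_\epsilon(x)\int_0^1 t^{-Q(1-1/p)+\epsilon}w(t)\,dt\qquad(|x|_h\geq1).
$$
Restricting the output norm to $|x|_h\geq1$ and computing yields a lower bound for the operator norm equal to $\omega_Q^{1/\bar{p}_2-1/\bar{p}_1}\int_0^1 t^{-Q(1-1/p)+\epsilon}w(t)\,dt$, and letting $\epsilon\to0$ (with dominated convergence) matches the upper bound.

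The main obstacle is the reconciliation of the two angular exponents: because $\mathcal{H}^*_{hw}$ preserves the angular profile of $f$ rather than averaging it out, the natural estimate produces $\|f\|_{L^{\bar{p}_2}_\theta}$ and one must pass to $\|f\|_{L^{\bar{p}_1}_\theta}$. This step depends on the sphere having finite measure $\omega_Q$ and on the direction of Hölder's inequality, so for the constant to be finite and sharp one needs (implicitly) $\bar{p}_2\leq\bar{p}_1$; the matching lower bound is then what forces the extremizer to be radial, since Hölder on the sphere is an equality precisely for functions constant in $\theta$. The remaining technical points are justifying the interchange of integrals in the two Minkowski steps and assuming the finiteness of $\int_0^1 t^{-Q(1-1/p)}w(t)\,dt$, which is exactly the quantity appearing in the claimed norm.
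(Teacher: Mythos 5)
Your proof is correct, and while it shares the paper's overall skeleton (Minkowski's integral inequality for the upper bound, the power-law family $f_\epsilon(x)=|x|_h^{-Q/p-\epsilon}\mathbf{1}_{\{|x|_h>1\}}$ for sharpness), it differs in how the angular variable is treated, and in fact fills a gap in the paper's argument. The paper simply writes $\|\mathcal{H}^*_{hw}f\|_{L^p_{|x|_h}L^{\bar p_2}_\theta}=\omega_Q^{1/\bar p_2}\bigl(\int_0^\infty|\mathcal{H}^*_{hw}(f)(r)|^p r^{Q-1}dr\bigr)^{1/p}$, i.e.\ it implicitly assumes $f$ radial by invoking the reduction from Theorem \ref{thm1}; but that reduction rested on the exact identity $\mathcal{H}_h g=\mathcal{H}_h f$ for the angular average $g$, which fails for $\mathcal{H}^*_{hw}$ precisely because, as you observe, $\mathcal{H}^*_{hw}$ acts along rays and preserves the angular profile of $f$. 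Your double-Minkowski-plus-H\"older argument handles general $f$ directly, and it exposes the hidden hypothesis $\bar{p}_2\le\bar{p}_1$: this condition is genuinely necessary, since for $\bar{p}_2>\bar{p}_1$ testing on $f(r,\theta)=u(r)v(\theta)$ with $v$ concentrated on a small spherical cap shows $\mathcal{H}^*_{hw}$ is unbounded from $L^p_{|x|_h}L^{\bar{p}_1}_\theta$ to $L^p_{|x|_h}L^{\bar{p}_2}_\theta$, so the theorem as stated (and Theorem \ref{thm3} likewise) requires this restriction even though the paper never mentions it. Your lower bound is also cleaner than the paper's: on $\{|x|_h\ge1\}$ your $f_\epsilon$ is an exact eigenfunction with eigenvalue $\int_0^1 t^{-Q(1-1/p)+\epsilon}w(t)\,dt$, whereas the paper truncates to $r>1/\epsilon$ and $t\in(\epsilon,1)$ and juggles $\epsilon^\epsilon$ factors (its displayed formula for $\mathcal{H}^*_{hw}(f_\epsilon)$ with the range $|x|_h^{-1}<t<1$ is in any case an unnecessary restriction, since $t<1\le|x|_h$ already places $\delta_{1/t}x$ in the support of $f_\epsilon$); letting $\epsilon\to0$ by monotone convergence (your appeal to dominated convergence also works under the assumed finiteness of $\int_0^1 t^{-Q(1-1/p)}w(t)\,dt$) yields the matching constant $\omega_Q^{1/\bar{p}_2-1/\bar{p}_1}\int_0^1 t^{-Q(1-1/p)}w(t)\,dt$. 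In short, both proofs extract the same sharp constant from the same test functions, but your treatment of the angular variable is the more complete argument, and your remark on $\bar{p}_2\le\bar{p}_1$ identifies a real omission in the paper.
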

The proof methods for Theorem \ref{thm3} and Theorem \ref{thm4} are the same, and similar to the proof method for Theorem \ref{thm1} . But as a special case, here we will give the proof of Theorem \ref{thm4}.
\begin{proof}[Proof of Theorem \ref{thm4}.]
Inspired by proof of Theorem \ref{thm1}, we have
$$
\|\mathcal{H}^*_{hw}\|_{L^p_{|x|_h}L^{\bar{p}_2}_\theta(\mathbb{H}^n)}=\omega_Q^{1/\bar{p}_2}\left(\int_{0}^{\infty}|\mathcal{H}_{hw}(f)(r)|^pr^{Q-1}dr\right)^{1/p},
$$
where $\mathcal{H}^*_{hw}(f)(r)$ can be defined as $\mathcal{H}^*_{hw}(f)(r)=\mathcal{H}^*_{hw}(f)(x)$ for any $|x|_h=r$. Using Minkowski's inequality, we can get that
$$
\begin{aligned}
\|\mathcal{H}^*_{hw}\|_{L^P_{|x|_h}L^{\bar{p}_2}_\theta(\mathbb{H}^n)}=&\omega_Q^{1/\bar{p}_2}\left(\int_{0}^{\infty}\left|\int_{0}^{1}\frac{f(\delta_{1/r}t)}{t^Q}w(t)dt\right|^pr^{Q-1}dr\right)^{1/p}\\
\leq&\omega_Q^{1/\bar{p}_2}\int_{0}^{1}\left(\int_{0}^{\infty}|f(\delta_{1/t}r)|^pr^{Q-1}dr\right)^{1/p}t^{-Q}w(t)dt\\
=&\omega_Q^{1/\bar{p}_2}\int_0^1\left(\int_0^\infty|f(r)|^pr^{Q-1}dr\right)^{1/p}t^{-Q+Q/p}w(t)dt\\
=&\omega_Q^{1/\bar{p}_2-1/\bar{p}_1}\int_0^1\left(\int_0^\infty\omega_Q^{p/\bar{p}_1}|f(r)|^pr^{Q-1}dr\right)^{1/p}t^{-Q+Q/p}w(t)dt\\
=&\omega_Q^{1/\bar{p}_2-1/\bar{p}_1}\int_0^1t^{-Q(1-\frac{1}{p})}w(t)dt\|f\|_{L^p_{|x|_h}L^{\bar{p}_1}_\theta}\\
=& Q^{1/\bar{p}_2-1/\bar{p}_1}\left(\frac{2 \pi^{n+\frac{1}{2}} \Gamma(n / 2)}{(n+1) \Gamma(n) \Gamma((n+1) / 2)}\right)^{1/\bar{p}_2-1/\bar{p}_1}\\
\end{aligned}
$$
$$
\begin{aligned}
&\times\int_{0}^{1}t^{-Q(1-\frac{1}{p})}w(t)dt\|f\|_{L^p_{|x|_h}L^{\bar{p}_1}_\theta}.
\end{aligned}
$$
Therefore, we have
$$\begin{aligned}
\|\mathcal{H}^*_{hw}\|_{L^p_{|x|_h}L^{\bar{p}_2}_\theta(\mathbb{H}^n)}\leq & Q^{1/\bar{p}_2-1/\bar{p}_1}\left(\frac{2 \pi^{n+\frac{1}{2}} \Gamma(n / 2)}{(n+1) \Gamma(n) \Gamma((n+1) / 2)}\right)^{1/\bar{p}_2-1/\bar{p}_1}\\
&\times\int_{0}^{1}t^{-Q(1-\frac{1}{p})}w(t)dt\|f\|_{L^p_{|x|_h}L^{\bar{p}_1}_\theta}.
\end{aligned}
$$
On the other hand,  taking
$$
C=\|\mathcal{H}^*_{hw}\|_{L^p_{|x|_h}L^{\bar{p}_2}_\theta(\mathbb{H}^n)\rightarrow L^p_{|x|_h}L^{\bar{p}_1}_\theta(\mathbb{H}^n)}<\infty,
$$
and for $f\in L^{p}_{|x|_h}L^{\bar{p}_2}_\theta(\mathbb{H}^n)$, we obtain
$$
\|\mathcal{H}^*_{hw}\|_{L^p_{|x|_h}L^{\bar{p}_2}_\theta(\mathbb{H}^n)}\leq C\|f\|L^p_{|x|_h}L^{\bar{p}_1}_\theta(\mathbb{H}^n).
$$
For any $\epsilon>0$, take
$$
f_\epsilon(t)= \begin{cases}0, & t \leq 1 ,\\ t^{-\left(\frac{Q}{p}+\epsilon\right)} & t>1.\end{cases}
$$
Then
$$
\|f_\epsilon\|^p_{L^p_{|x|_h}L^{\bar{p}_1}_\theta}=\frac{\omega_Q^{p/\bar{p}_1}}{p\epsilon},
$$
and
$$
\mathcal{H}^*_{hw}(f_\epsilon)(x)= \begin{cases}0, & |x|_h \leq 1, \\ |x|_h^{-{\frac{Q}{p}}-\epsilon} \int_{|x|_h^{-1}<t <1}t^{\frac{Q}{p}+\epsilon-Q}w(t) d t, & |x|_h>1\end{cases}.
$$
So we have 
$$
\begin{aligned}
C^p\|f_\epsilon\|^P_{L^p_{|x|_h}L^{\bar{p}_1}_\theta}&\geq\|\mathcal{H}^*_{hw}\|^p_{L^p_{|x|_h}L^{\bar{p}_2}_\theta}\\
&=\omega_Q^{1/\bar{p}_2}\left(\int_{r>1}\left|r^{-\frac{Q}{p}-\epsilon}\int_{r^{-1}<t<1}t^{\frac{Q}{p}+\epsilon-Q}dt\right|^pr^{Q-1} dr\right)^{1/p}\\
&\geq\omega_Q^{1/\bar{p}_2}\left(\int_{r>\frac{1}{\epsilon}}\left|r^{-\frac{Q}{p}-\epsilon}\int_{\epsilon<t<1}t^{\frac{Q}{p}+\epsilon-Q}w(t)dt\right|^pr^{Q-1}dr\right)^{1/p}\\
&=\omega_Q^{1/\bar{p}_2}\int_{r>\frac{1}{\epsilon}}r^{-p\epsilon-Q}dr\left(\int_{\epsilon<t<1}t^{\frac{Q}{p}+\epsilon-Q}w(t)dt\right)^p.
\end{aligned}
$$
By changing of variable $r=\delta_{1/\epsilon}y$, we have
$$
\begin{aligned}
C^p\|f_\epsilon\|^P_{L^p_{|x|_h}L^{\bar{p}_1}}&\geq\omega_Q^{1/\bar{p}_2}\int_{|y|_h>1}|y|_h^{-p\epsilon-Q}\epsilon^{\epsilon p}dy\left(\int_{\epsilon<t<1}t^{\frac{Q}{p}+\epsilon-Q}w(t)dt\right)^p\\
&=\omega_Q^{1/\bar{p}_2-1/\bar{p}_1}\left(\epsilon^\epsilon\int_{1<t<\epsilon}t^{\frac{Q}{p}+\epsilon-Q}w(t)dt\right)^p\|f_\epsilon\|_{L^P_{|x|_h}L^{\bar{p}_1}_\theta(\mathbb{H}^n)}.
\end{aligned}
$$ 
This implies that
$$
\epsilon^\epsilon\int_{1<t<\epsilon}t^{\frac{Q}{p}+\epsilon-Q}w(t)dt\leq C.
$$
Let $\epsilon\rightarrow0$, we have
$$
\int_{0}^1t^{\frac{Q}{p}-Q}w(t)dt\leq C.
$$
Thus, we have finished the proof of Theorem \ref{thm4}.
\end{proof}
\par
\subsection*{Acknowledgements}
This work was supported by National Natural Science Foundation of  China (Grant No. 12271232) and Shandong Jianzhu University Foundation (Grant No. X20075Z0101).

\begin{flushleft}
	
	\vspace{0.3cm}\textsc{Zhongci Hang\\School of Science\\Shandong Jianzhu University \\Jinan, 250000\\P. R. China}

    \emph{E-mail address}: \textsf{babysbreath4fc4@163.com}
	
		\vspace{0.3cm}\textsc{Xiang Li\\School of Science\\Shandong Jianzhu University\\Jinan, 250000\\P. R. China}
	
	\emph{E-mail address}: \textsf{lixiang162@mails.ucas.ac.cn}

	\vspace{0.3cm}\textsc{Dunyan Yan\\School of Mathematical Sciences\\University of Chinese Academy of Sciences\\Beijing, 100049\\P. R. China}
	
	\emph{E-mail address}: \textsf{ydunyan@ucas.ac.cn}
	
\end{flushleft}

\end{document}